\documentclass[11pt]{article}

\usepackage[a4paper,margin=1in]{geometry}
\usepackage[T1]{fontenc}
\usepackage[utf8]{inputenc}
\usepackage{lmodern}
\usepackage{microtype}
\usepackage{amsmath,amssymb,amsthm}
\usepackage{mathtools}
\usepackage{cite,xcolor}
\usepackage[nameinlink,noabbrev]{cleveref}
\usepackage{enumitem}
\newtheorem{theorem}{Theorem}[section]
\newtheorem{lemma}[theorem]{Lemma}
\newtheorem{proposition}[theorem]{Proposition}

\newtheorem{conjecture}[theorem]{Conjecture}
\theoremstyle{remark}

\newtheorem{question}[theorem]{Question}

\theoremstyle{plain}

\Crefname{theorem}{Theorem}{Theorems}
\Crefname{lemma}{Lemma}{Lemmas}
\Crefname{proposition}{Proposition}{Propositions}
\Crefname{corollary}{Corollary}{Corollaries}
\Crefname{conjecture}{Conjecture}{Conjectures}
\Crefname{remark}{Remark}{Remarks}
\Crefname{section}{Section}{Sections}

\newcommand{\rk}{\operatorname{rk}}
\newcommand{\B}{\mathcal B}
\newcommand{\G}{\mathcal G}
\newcommand{\supp}{\operatorname{supp}}

\usepackage{thm-restate}

\title{Neither simpliciality nor mutation connectivity:\\ conjectures of Las Vergnas and Cordovil--Las Vergnas fail}
\author{Qiyuan Gu\thanks{%
University of Chicago,
Chicago, IL, USA.
Email: \texttt{phoenix1203@uchicago.edu}.}
\and
Kolja Knauer\thanks{%
Departament de Matemàtiques i Informàtica,
Universitat de Barcelona, Barcelona, Spain;
Centre de Recerca Matemàtica (CRM),
Campus de Bellaterra, Edifici C,
08193 Bellaterra, Barcelona, Spain.
Email: \texttt{kolja.knauer@ub.edu}.}
}
\date{}
\begin{document}
\maketitle

\begin{abstract}
We construct a simple rank-$7$ oriented matroid on $24$ elements with no simplicial tope, disproving the Las Vergnas simplex conjecture from 1980. We then show that the mutation graph on uniform oriented matroids is disconnected for infinitely many values of rank $r$ and ground-set size $n$, disproving the Cordovil--Las Vergnas conjecture from 1988. 
\end{abstract}

\section{Introduction}

Oriented matroids were introduced by Bland--Las Vergnas and Folkman--Lawrence as an abstraction of real linear geometry~\cite{BL78,FL78}. They provide a broad setting in which to model, describe, and analyze combinatorial properties of geometric configurations, such as point and vector configurations, convex polytopes, directed graphs, linear programming, hyperplane arrangements, and great sphere arrangements. For introductions and complementary viewpoints we refer to the monographs \cite{BLSWZ99,Bok06,And25} and to the dynamic survey \cite{ZAK24}.

While oriented matroids preserve much of linear geometry, including duality and Farkas-type theorems, the nonrealizable world is substantially richer. For instance,
Pappus' theorem fails, face lattices of oriented matroids need not admit
polars, and the classical theorems of Weyl and Minkowski do not extend to
oriented matroids \cite{BLSWZ99,BilleraMunson84}. Moreover,
recognizing which oriented matroids are realizable is already
$\exists\mathbb R$-complete in rank~$3$ \cite{Mne88,Shor91}. For every fixed rank $r\geq 3$, asymptotically almost all oriented
matroids are nonrealizable \cite{BLSWZ99}.

There is however a strong positive result. A cornerstone distinguishing oriented matroid theory from ordinary matroid theory is the Topological Representation Theorem. It shows that, by relaxing arrangements of great spheres arising from real hyperplanes to arrangements of pseudospheres, every simple oriented matroid can be represented topologically \cite{FL78}.

One of the longest-standing open problems in the area, reflecting the remaining gap between topology and combinatorics in oriented matroid theory, is the Las Vergnas Simplex Conjecture. It says that in its topological representation:

\begin{conjecture}[Las Vergnas Simplex Conjecture~\cite{Las80}]\label{prob:LasVergnas}
 Every simple oriented matroid has a full-dimensional simplicial cell.
\end{conjecture}

The conjecture is supported by the fact that it holds for all realizable oriented matroids~\cite{Sha79} as well as for all rank-$3$ oriented matroids~\cite{fuk-93}. Bokowski and Rohlfs verified the uniform rank-$4$ conjecture computationally for $n<13$ \cite{BR01}, and Miyata proved it for uniform rank-$4$ matroid polytopes \cite{Miy20}.  
Mandel proved the conjecture for a substantially larger class containing Euclidean oriented matroids and posed the ``wishful thinking conjecture'' that every oriented matroid lies in that class \cite{Man82}.

In a negative direction, oriented matroids with a mutation-free element, i.e., one that does not participate in any simplicial cell, have been constructed on $20$ elements by Richter-Gebert \cite{Ric93}, on $17$ elements by Bokowski--Rohlfs~\cite{BR01}, and finally on $13$ elements by Hall~\cite{Hal04}. Hall's construction was a key ingredient in his proof that the cross-polytope is not extendably shellable in dimensions at least $12$. The construction later reappeared in learning theory, where Chalopin, Chepoi, Moran, and Warmuth used it to refute several earlier assertions about corner peelings and unlabeled sample compression \cite{CCMW22}. In \cite{KM23}, the existence of Hall's mutation-free element is used to disprove Mandel's ``wishful thinking conjecture'' \cite{Man82}. 

We use the parallel connections developed by Hochstättler and Nickel~\cite{HN11} on oriented matroids along mutation-free elements to disprove  
\Cref{prob:LasVergnas}. More precisely, for a simple oriented matroid $\mathcal M$, let $\delta(\mathcal M)$ denote the
minimum number of facets of a tope, and let
$\delta(r)=\sup\{\delta(\mathcal M):\mathcal M\text{ is simple of rank }r\}$. Then
\Cref{prob:LasVergnas} asserts that $\delta(r)=r$ for all $r$. However, we show:

\begin{restatable}{theorem}{mainSimplex}\label{thm:main-simplex}
For every integer $r\geq7$,
$$r+\left\lfloor\frac{r-1}{3}\right\rfloor-1
 \ \leq\ \delta(r)\ \leq\ 2r-3.$$
In particular, there is a simple rank-$7$ oriented matroid on $24$ elements with no simplicial tope.
\end{restatable}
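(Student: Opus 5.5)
The argument has two parts: an upper bound on $\delta(r)$ valid for every simple oriented matroid, and a lower bound obtained by gluing copies of Hall's matroid along a mutation-free element.

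\emph{Upper bound.} Fix a simple oriented matroid $\mathcal M$ of rank $r$ and look at its tope graph, i.e.\ its pseudosphere arrangement. Since the tope graph is finite, some tope has few facets, and a counting argument should give $\delta(\mathcal M)\le 2r-3$. The classical Shannon-type count says that every element of $E$ is a facet of at least $r$ topes. Combined with Euler-characteristic relations on the face lattice (the Zaslavsky and Las Vergnas counts of cells of each dimension), this bounds the average number of facets per tope. The resulting average bound may come out slightly weaker than $2r-3$. In that case I would instead argue by induction on $r$ via contraction: take a tope $T$ adjacent to an element $e$ and pass to $\mathcal M/e$, which has rank $r-1$ and, by induction, a tope with at most $2(r-1)-3$ facets. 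Lifting this tope back to $\mathcal M$ adds at most two facets, namely $e$ and one neighbour, which gives $2r-3$. The base case $r=3$ is Fukuda's theorem~\cite{fuk-93}, which gives $\delta(3)=3=2\cdot 3-3$. I expect the lifting step, where one has to control how many new facets appear, to be delicate but standard.

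\emph{Lower bound.} Take Hall's rank-$4$ oriented matroid $\mathcal H$ on $13$ elements with a mutation-free element $p$~\cite{Hal04}. Use the Hochstättler--Nickel parallel connection~\cite{HN11} to glue copies of $\mathcal H$ along $p$.
\begin{itemize}
\item Gluing $k$ copies along a common basepoint gives rank $3k+1$ and $12k+1$ elements. For $k=2$ this is rank $7$ on $25$ elements.
\item We then delete $p$. In a parallel connection, $p$ is spanned by each side, and its deletion is the series-type structure, which yields rank $7$ on $24$ elements. This matches the claimed example, so the construction must be the parallel connection with the basepoint deleted.
\end{itemize}

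The key step is to describe the topes of $P(\mathcal H_1,\mathcal H_2)\setminus p$. Such a tope restricts to topes $T_i$ of each $\mathcal H_i$ with a compatible sign on $p$. Its facets are essentially the facets of $T_1$ other than $p$ together with the facets of $T_2$ other than $p$, with $p$ removed or merged. Since $p$ is mutation-free, every tope of $\mathcal H_i$ having $p$ as a facet is non-simplicial, so it has at least $5$ facets in rank $4$, that is, at least $4$ facets besides $p$. Every tope of the glued matroid either has $p$ as an essential facet on at least one side, or arises from a tope of $\mathcal H_i$ in which $p$ is not a facet. In the second case that tope of $\mathcal H_i$ has at least $4$ facets, and together with the other side this again forces many facets. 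Summing over the sides gives at least $r+1$ facets for $r=7$, i.e.\ $\delta\ge 8=7+2-1$.

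For general $r$, write $r-1=3k+s$ with $0\le s\le 2$. Glue $k$ copies of $\mathcal H$ along $p$, so that each copy contributes one excess facet. Then raise the rank by $s$ using free coextensions or direct sums with coordinate elements, which preserves the excess. This yields $\delta\ge r+k-1$.

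\emph{Main obstacle.} The hardest part is the precise facet bookkeeping for topes of the parallel connection, especially topes on which $p$ separates the copies differently. I would first verify the $r=7$, $24$-element case directly, by computer enumeration of the topes if necessary, and then abstract the counting lemma from that verification.
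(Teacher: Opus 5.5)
Your upper bound has a genuine gap. The averaging idea is the right one, but it is the whole content of that half and cannot be left at ``should give.'' Every covector with exactly one zero is a facet of exactly two topes, so the average number of facets per tope is $2f_{r-2}/f_{r-1}$. The inequality you need, $f_{r-2}<(r-1)f_{r-1}$, is exactly the Fukuda--Tamura--Tokuyama average-subface theorem (average strictly below $2r-2$ for $r\ge 3$). That theorem is all the paper uses to get a tope with at most $2r-3$ facets.

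Your fallback induction via contraction fails. A tope of $\mathcal M/e$ is a covector $X$ of $\mathcal M$ with $X^0=\{e\}$. The two topes of $\mathcal M$ above $X$ can have arbitrarily many facets whose pseudospheres never meet $X$. This already happens in rank $3$: take many lines tangent to a circle and let $e$ be a diameter. The segment of $e$ inside the central cell has $2$ facets in $\mathcal M/e$, but both topes adjacent to it have unboundedly many facets. So ``lifting adds at most two facets'' is false, and the induction hypothesis gives you no way to choose a better tope. Moreover, $\mathcal M/e$ need not be simple.

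Your lower bound is the paper's construction: $k$ copies of Hall's matroid glued by iterated parallel connection at the mutation-free element, that element deleted, then $s$ coloops added. Your dichotomy is the right one. The bookkeeping you call the main obstacle is short once you use the covector description of the parallel connection: $X$ is a covector of $\mathcal P$ if and only if each $X|_{E_i}$ is a covector of $\mathcal H_i$.

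\begin{enumerate}[leftmargin=2em]
\item A tope $T$ of $\mathcal P\setminus p$ lifts to a covector of $\mathcal P$. If that lift vanishes at $p$, compose each restriction with a tope of $\mathcal H_i$ having a fixed sign $\sigma$ at $p$.
\item This gives topes $T_i$ of $\mathcal H_i$ with one common sign at $p$, so $p$ never ``separates the copies differently.''
\item Each $T_i$ has at least $4$ facets not supported by $p$. Replacing $T_i$ by such a facet and keeping the other $T_j$ gives a covector of $\mathcal P$, hence a facet of $T$ after deleting $p$.
\item Facets coming from different copies are distinct, so $T$ has at least $4k$ facets.
\end{enumerate}

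You also have to check that $p$ is neither a loop nor a coloop of $\mathcal P$, and that $\mathcal P\setminus p$ is simple (glued circuits keep at least two elements on each side). This matters because $\delta(r)$ is a supremum over simple oriented matroids.

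Finally, raise the rank with coloops, each of which adds exactly one facet to every tope. It is not clear that a free coextension preserves the excess. Minor point: $\mathcal P\setminus p$ is a $2$-sum, not a series connection.
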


This leaves the following question:

\begin{question}
What is the smallest $c$ such that $\delta(r)\leq cr$?
\end{question}

Note that the examples in the above theorem are far from being \emph{uniform}, i.e., their representing pseudosphere arrangements
are not in general position. The following is thus a very natural:

\begin{question}
    Are there uniform counterexamples to Las Vergnas' conjecture?
\end{question}

In the uniform setting, the Topological Representation Theorem is a perfect illustration of how topology and combinatorics can be intertwined.  If a uniform oriented matroid has a simplicial cell, then one can understand combinatorially what it means to transform the arrangement by ``pulling one pseudosphere bounding the simplicial cell over its opposite vertex''. This operation is called \emph{mutation} and leads to the notion of \emph{mutation graph} on the set of uniform oriented matroids of fixed rank on a fixed ground set. 

More precisely, following \cite{KM23}, for fixed $n,r$ we distinguish three graphs: $\overline{\G}^{n,r}$ has as vertices all labeled uniform rank-$r$ oriented matroids on one fixed $n$-element ground set, $\G^{n,r}$ has reorientation classes as vertices, and $\underline{\G}^{n,r}$ has isomorphism classes of reorientation classes as vertices. Edges are induced by mutations. There are natural quotient maps
\[
 \overline{\G}^{n,r}\longrightarrow \G^{n,r}\longrightarrow \underline{\G}^{n,r},
\]
which send edges to edges or collapse them, so connectivity propagates from left to right.
The following conjecture was recorded by {Roudneff and Sturmfels~\cite{RS88}, and Lawrence notes explicitly that they attribute it to Cordovil and Las Vergnas \cite{Law00}}:  

\begin{conjecture}[Cordovil--Las Vergnas~\cite{RS88}]\label{conj:Cordovil}
Any two uniform oriented matroids of fixed rank on a fixed ground set can be transformed into each other by mutations, i.e., the mutation graph $\overline{\G}^{n,r}$ is connected for all $n,r$.
\end{conjecture}

Ringel's Homotopy Theorem for pseudoline arrangements \cite{Rin56,Rin57} shows connectivity of $\G^{n,3}$
and \cite[Proposition~3.2]{KM23} extends this to $\overline{G}^{n,3}$. Roudneff and Sturmfels~\cite{RS88} proved that the subgraph of $\overline{G}^{n,r}$ induced by realizable oriented matroids is connected. Knauer and Marc verified connectivity of ${G}^{n,r}$ computationally for all $n\leq 9$~\cite{KM23}. 
Write $c(G)$ for the number of connected components of a graph $G$. Using weak-map preimages of the counterexample from \Cref{thm:main-simplex} we obtain:

\begin{restatable}{theorem}{mainMutation}\label{thm:main-mutation}
For every $r\geq 7$ and $n\geq r+17$, the mutation graphs
$\mathcal G^{n,r}$, $\underline{\mathcal G}^{n,r}$, and
$\overline{\mathcal G}^{n,r}$ are disconnected.
Moreover, for every fixed $r\geq 10$, as $n\to\infty$,

$$
2^{\Omega(n^{r-8})}
\leq c(\underline{\mathcal G}^{n,r})
\leq c(\mathcal G^{n,r})
\leq c(\overline{\mathcal G}^{n,r})
\leq 2^{O(n^{r-1})}.$$
\end{restatable}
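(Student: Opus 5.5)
The plan is to derive disconnectedness from \Cref{thm:main-simplex} through weak maps. Let $\mathcal M_0$ be the simple rank-$7$ oriented matroid on $24$ elements with no simplicial tope. The key fact we use is that if $\mathcal N$ is uniform and $\mathcal N\to\mathcal M$ is a weak map, then every tope of $\mathcal M$ is a union of topes of $\mathcal N$, and simplicial topes of $\mathcal N$ are controlled by $\mathcal M$.

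\textbf{Step 1: an invariant that mutations cannot cross.} Consider a uniform $\mathcal N$ in the weak-map preimage of $\mathcal M_0$, meaning $\mathcal N\rightsquigarrow\mathcal M_0$. Each simplicial tope of $\mathcal N$ is bounded by $r$ pseudospheres. We show that any mutation of $\mathcal N$ at a simplicial tope yields another oriented matroid that still weak-maps to $\mathcal M_0$.

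\emph{Intended argument.} A simplicial tope $T$ of $\mathcal N$ lies inside some tope $T'$ of $\mathcal M_0$. Since $T'$ is not simplicial, the simplex $T$ cannot realize a full facet structure of $T'$. As a result, flipping the basis orientation of the mutated $r$-tuple changes a sign that is $0$ in the chirotope of $\mathcal M_0$. In other words, the mutated basis is dependent in $\mathcal M_0$.

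Put precisely: a mutation flips $\chi_{\mathcal N}$ on exactly one basis $B$. The result weak-maps to $\mathcal M_0$ if and only if $\chi_{\mathcal M_0}(B)=0$. So we need to show that every simplicial-tope basis of a uniform preimage is dependent in $\mathcal M_0$. This is where the absence of simplicial topes in $\mathcal M_0$ enters. If $B$ were a basis of $\mathcal M_0$ bounding a simplicial region of $\mathcal N$, then that region would lie in a tope of $\mathcal M_0$ whose facets include the $r$ elements of $B$. One then argues, via a localization or contraction argument, that this forces a simplicial tope of $\mathcal M_0$.

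This step is the main obstacle. If the direct argument fails, I would instead pass to the paper's construction. There, $\mathcal M_0$ is built from parallel connections along mutation-free elements, which give stronger local rigidity.

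\textbf{Step 2: non-emptiness and the complement.} The class $P$ of uniform preimages of $\mathcal M_0$ is closed under mutation and nonempty, since every oriented matroid has a uniform perturbation (realizability is not needed). To show disconnectedness, it then suffices to exhibit a uniform oriented matroid outside $P$, for instance a realizable alternating one.

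To extend to rank $r\ge 7$ and $n\ge r+17$, replace $\mathcal M_0$ by a suitable extension. Add $r-7$ coloops through direct sums (rank $r$ on $r+17$ elements), then add $n-r-17$ generic extension elements. We must check that the absence of simplicial topes is preserved; a direct sum with a coloop just doubles topes, so this holds. Isomorphism and reorientation classes inherit disconnectedness, because $P$ is closed under reorientation of $\mathcal M_0$. Relabeling is handled by taking the union over all images of $\mathcal M_0$, which is still a proper mutation-closed subset as long as the alternating matroid avoids it.

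\textbf{Step 3: counting.} For the upper bound, the number of uniform oriented matroids is $2^{O(n^{r-1})}$, which is the known Edelman--Goodman-type bound on chirotopes. For the lower bound, we build many pairwise non-mutation-equivalent classes. Take $\mathcal M_0$ of rank $7$ (or the rank-$r'$ versions from \Cref{thm:main-simplex}) and combine it with a free part of rank $r-8$ carrying a rank-$(r-8)$ uniform oriented matroid $\mathcal U$ on roughly $n$ elements. There are $2^{\Omega(n^{r-8})}$ such $\mathcal U$, counted up to isomorphism since the count dominates $n!$.

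The invariant used here is the contraction or restriction to the free part. Mutations inside the rigid part never alter $\mathcal U$, so distinct $\mathcal U$ give distinct components.
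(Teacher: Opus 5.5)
\textbf{There is a genuine gap.} Your overall architecture matches the paper's: weak-map fibers that are closed under mutation, nonempty and proper, followed by direct sums for counting. Your reduction in Step 1 is also right: a mutation at $B$ preserves $\mathcal N\rightsquigarrow\mathcal M_0$ iff $\chi_{\mathcal M_0}(B)=0$. But the statement it reduces to is the crux of the whole theorem, and you do not prove it. That statement is: if $\mathcal N\rightsquigarrow\mathcal M$ with $\mathcal N$ uniform and $\mathcal M$ simple, and a mutation $B$ of $\mathcal N$ is a basis of $\mathcal M$, then $\mathcal M$ has a simplicial tope supported by $B$. You leave it as an unspecified ``localization or contraction argument''. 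Falling back on ``the paper's construction'' is not an argument either, since no special structure of $\mathcal M_0$ is needed; simplicity suffices.

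Your heuristic is also backwards. For a rank-preserving weak map $\mathcal N\rightsquigarrow\mathcal M$ with $\mathcal M$ loopless, every tope of $\mathcal M$ is a tope of $\mathcal N$: cells of $\mathcal N$ collapse rather than merge. So a simplex of $\mathcal N$ does not sit inside a larger tope of $\mathcal M_0$.

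The missing idea is to compare fundamental cocircuits at the common basis $B$. Suppose $B$ is a basis of $\mathcal M_0$.
\begin{enumerate}[leftmargin=2em]
\item Reorient so that the simplicial tope of $\mathcal N$ with facets supported by $B$ is positive. Its atoms are the $D_{\mathcal N}(B,b)$, $b\in B$, which are therefore nonnegative.
\item Since $D(B,b)(e)=\chi(b_1,\dots,e,\dots,b_r)\,\chi(b_1,\dots,b_r)$ and $\chi_{\mathcal M_0}(B)=\chi_{\mathcal N}(B)\neq0$, each $D_{\mathcal M_0}(B,b)$ arises by zeroing entries, hence is nonnegative.
\item Looplessness makes their composition the positive tope $T$ of $\mathcal M_0$.
\item A facet of $T$ with zero-set $\{e\}$, $e\notin B$, would be positive on $B$. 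Meanwhile $C_{\mathcal M_0}(B,e)$ is $+$ at $e$ and, by $C(B,e)(f)=-D(B,f)(e)$, negative on its nonempty support in $B$. This contradicts orthogonality.
\end{enumerate}
Hence $T$ has exactly the $r$ facets supported by $B$, i.e.\ it is simplicial, a contradiction.

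\textbf{Further holes in the later steps.}
\begin{enumerate}[leftmargin=2em]
\item Adding ``generic extension elements'' to $\mathcal M_0$ need not preserve the absence of simplicial topes, since a new pseudosphere can cut a corner off a non-simplicial tope. You give no argument that it does. What works is to leave $\mathcal M_0$ alone and push disconnectedness upward from $(24,7)$ by deletion and contraction of a fixed element. These give surjective weak graph homomorphisms $\overline{\G}^{n+1,r}\to\overline{\G}^{n,r}$ and $\overline{\G}^{n+1,r+1}\to\overline{\G}^{n,r}$, surjective because uniform extensions and liftings exist.
\item That the alternating matroid avoids the fiber is only asserted. With the key lemma, you can instead relabel a realizable uniform matroid so that one of its mutations (by Shannon) is a basis of $\mathcal M_0$. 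For $\underline{\G}$, use that connectivity of $\G$ and $\underline{\G}$ are equivalent, rather than an unverified union over relabelings.
\item In Step 3, a free part of rank $r-8$ yields only $2^{\Theta(n^{r-9})}$ choices. You need $\mathcal M_0\oplus\mathcal U$ with $\rk\mathcal U=r-7\geq 3$.
\item The counting invariant must be read off the uniform preimages, not off a ``rigid part''. By the key lemma, mutations within the fiber of $\mathcal M_0\oplus\mathcal U$ occur only at its nonbases. So $\chi_{\mathcal N}$ restricted to the common set of bases is preserved along mutations, and it determines $\mathcal M_0\oplus\mathcal U$ up to reorientation. This is what makes the fibers for reorientation-inequivalent $\mathcal U$ pairwise disjoint unions of components.
\end{enumerate}
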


\begin{question}
    What is the  asymptotic behavior of $c(\underline{\mathcal G}^{n,r}), c(\mathcal G^{n,r}), c(\overline{\mathcal G}^{n,r})$?
\end{question}

\section{Oriented matroid preliminaries}\label{sec:prelim}

We recall only the standard oriented-matroid terminology used below; see
\cite{BLSWZ99,And25} for background. Let $E$ be a finite set. For a sign
vector $X\in\{0,+,-\}^E$, write
\[
 X^0=\{e\in E:X(e)=0\},\qquad
 \supp(X)=E\setminus X^0,
\]
for its \emph{zero-set} and \emph{support}, respectively. Call $X$ \emph{nonnegative} if $X\in\{0,+\}^E$. 
For $A\subseteq E$, the \emph{restriction} of $X$ to $A$ is denoted by
$X|_A$. We also write $X\setminus A:=X|_{
E\setminus A}$. For $A\subseteq E$, the \emph{reorientation} ${}_{-A}X$ is obtained
from $X$ by reversing all nonzero signs on $A$. For sign vectors
$X,Y$, their \emph{composition} is
\[
 (X\circ Y)(e)=\begin{cases}X(e),&X(e)\neq0,\\Y(e),&X(e)=0.\end{cases}
\]

Their separator is $S(X,Y)=\{e\in E:X(e)=-Y(e)\neq0\}$. An \emph{oriented matroid} is a pair $\mathcal{M}=(E,\mathcal L)$, where $E$ is its \emph{ground set} and $\mathcal L\subseteq\{0,+,-\}^E$ is a set of \emph{covectors} satisfying

\begin{enumerate}[leftmargin=3em]
\item[(CV0)] $0\in\mathcal L$,
\item[(CV1)] $X\in\mathcal L$ implies $-X\in\mathcal L$,
\item[(CV2)] $X,Y\in\mathcal L$ implies $X\circ Y\in\mathcal L$,
\item[(CV3)] if $X,Y\in\mathcal L$ and $e\in S(X,Y)$, then there is $Z\in\mathcal L$ with $Z(e)=0$ and $Z(f)=(X\circ Y)(f)$ for every $f\notin S(X,Y)$.
\end{enumerate}
For $A\subseteq E$, we denote by
${}_{-A}\mathcal M$ the oriented matroid obtained by reorienting all its
covectors on $A$.
Under the componentwise order generated by $0<+$ and $0<-$,
together with an artificial maximum $\hat 1$, the covectors form the
\emph{big face lattice} $\mathcal F(\mathcal M)$. The height of $\mathcal F(\mathcal M)$ minus $1$ is the \emph{rank} $\rk(\mathcal{M})$ of $\mathcal{M}$. 
The atoms of $\mathcal F(\mathcal M)$ are the \emph{cocircuits} of $\mathcal{M}$.

Two sign vectors $X,Y$ are \emph{orthogonal}, denoted $X\bot Y$ if $\supp(X)\cap \supp(Y)=\emptyset$ or there are $e,f\in E$ such that $X(e)Y(e)=+$ and $X(f)Y(f)=-$. The \emph{circuits} of $\mathcal{M}$ are defined by \emph{orthogonality}, i.e., they are the support-minimal nonzero sign vectors satisfying $X\bot Y$ for every $Y\in\mathcal{L}$.
A \emph{coloop} is an $e\in E$ such that there is a cocircuit $X$ with $\supp(X)=\{e\}$. A \emph{loop} is an $e\in E$ such that there is a circuit $X$ with $\supp(X)=\{e\}$. We say that $\mathcal{M}$ is \emph{simple} if all its circuits have support of size at least $3$.

The coatoms of
$\mathcal F(\mathcal M)$ are the \emph{topes} of $\mathcal M$. A
\emph{facet} $X$ of a tope $T$ is a covector covered by $T$. An element
$g\in E$ \emph{supports} such a facet if $X(g)=0$. If
$\mathcal M$ is simple, the topes are precisely its covectors with empty zero-set and every facet has singleton zero-set. 
In a simple rank-$r$ oriented matroid every tope has at least $r$ facets,
and it is \emph{simplicial} if it has exactly $r$. Equivalently, the interval
$[0,T]\subseteq \mathcal F(\mathcal M)$ is a Boolean lattice \cite{BLSWZ99,KM23}. We call $g\in E$ \emph{mutation-free}
if no simplicial tope has a facet supported by $g$.

For the second part of the paper an equivalent axiomatization of oriented matroids will be more convenient. For some integer $r$, called the \emph{rank}, an oriented
matroid can be described by a \emph{chirotope}, i.e., a nonzero alternating map
\[
 \chi:E^r\longrightarrow\{0,+,-\}
\]
satisfying the \emph{Grassmann--Plücker} relations \cite[Chapter~3]{BLSWZ99}. On chirotopes the reorientation of a set $A\subseteq E$  multiplies $\chi(B)$ by $(-1)^{|B\cap A|}$. 
The chirotopes $\chi$ and $-\chi$ define the same oriented matroid; we refer to this ambiguity as the \emph{global chirotope sign}. An $r$-subset
$B\subseteq E$ is a \emph{basis} if $\chi(B)\neq0$, and $\mathcal M$ is
\emph{uniform} if every $r$-subset is a basis. In this language an element is a \emph{loop}
if it belongs to no basis and a \emph{coloop} if it belongs to every basis.  

For a uniform oriented
matroid, a \emph{mutation} is an $r$-subset $B$ for which reversing
$\chi(B)$, and only the values forced by alternation, again gives a
chirotope. Equivalently, $B$ is a mutation exactly when some simplicial
tope has its facets supported by the elements of $B$
\cite[Section~7.3]{BLSWZ99}. Reorientation and relabeling preserve mutations.

\section{Counterexample to the Las Vergnas simplex conjecture}\label{sec:twosum}

We need the oriented parallel connection developed by Hochst\"attler and Nickel~\cite{HN11}. Let $\mathcal M_i=(E_i,\mathcal L_i)$, $i=1,2$, have ranks $r_i$, let $E_1\cap E_2=\{g\}$, and assume that $g$ is neither a loop nor a coloop of either summand. Write $\mathcal C_i$ and $\B_i$ for the sets of signed circuits and bases of $\mathcal M_i$, respectively. By~\cite[Proposition~5]{HN11}, the \emph{oriented parallel connection} $\mathcal P=\mathcal M_1\oplus_P \mathcal M_2$ is the oriented matroid on $E_1\cup E_2$ whose covectors are precisely
\begin{equation}\label{eq:parallel-covectors}
 \mathcal L({\mathcal P})
 =\{X\in\{0,+,-\}^{E_1\cup E_2}:X|_{E_i}\in\mathcal L_i,\ i=1,2\}.
\end{equation}
For the circuit description, extend each $C_i\in\mathcal C_i$ by zero outside $E_i$. Then the signed circuits of $\mathcal P$ are given by~\cite[Section~3.1]{HN11}

\begin{equation}\label{eq:parallel-circuits}
  \mathcal C(\mathcal P)=\mathcal C_1\cup\mathcal C_2
  \cup\bigl\{(C_1\circ C_2)^{g\leftarrow 0}:C_i\in\mathcal C_i\ (i=1,2), g\in S(C_1,C_2)\bigr\},
 \end{equation}
where $(C_1\circ C_2)^{g\leftarrow 0}$ is the sign vector obtained from by setting the $g$-coordinate to $0$.
The bases of $\mathcal P$ are given by~\cite[Section~3.3]{HN11}
\begin{equation}\label{eq:parallel-bases}
 \begin{aligned}
  \B(\mathcal P)={}&
   \{B_1\cup B_2:B_i\in\B_i\ (i=1,2),\ g\in B_1\cap B_2\}\\
  &{}\cup\bigl\{(B_1\cup B_2)\setminus\{g\}:B_i\in\B_i\ (i=1,2),\
       g\in B_1\triangle B_2\bigr\},
 \end{aligned}
\end{equation}
where $\triangle$ denotes symmetric difference. In particular, since $g$ is a nonloop in each summand, 
\begin{equation}\label{eq:parallel-rank}
 \rk(\mathcal M_1\oplus_P\mathcal M_2)=r_1+r_2-1.
\end{equation}
As a last ingredient, for $A\subseteq E$ we use the standard \emph{deletion} formula
\begin{equation}\label{eq:deletion-covectors}
 \mathcal L(\mathcal M\setminus A)
 =\{X|_{E\setminus A}:X\in\mathcal L(\mathcal M)\}.
\end{equation}

The heart of the construction is the following, where iterated parallel connections are taken successively at the same element $g$.

\begin{lemma}\label{prop:connection-facets}
Let $\mathcal M_i=(E_i,\mathcal L_i)$, $i\in[k]$, be simple oriented matroids of ranks $r_i$, where $k\geq1$, the sets $E_i\setminus\{g\}$ are pairwise disjoint, and $g$ is neither a loop nor a coloop and is mutation-free in every $\mathcal M_i$. Let
\[
 \mathcal P=\mathcal M_1\oplus_P\cdots\oplus_P \mathcal M_k
\]
be their iterated oriented parallel connection at $g$. Then $\mathcal P\setminus g$ is simple of rank
\[
 \rk(\mathcal P\setminus g)=1+\sum_{i=1}^k(r_i-1),
\]
has $\sum_i(|E_i|-1)$ elements, and every tope of $\mathcal P\setminus g$ has at least $\sum_{i=1}^k r_i$ facets.
\end{lemma}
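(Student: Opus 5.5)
The plan has two parts: first the structural claims (rank, simplicity, number of elements) by induction on $k$, then the facet count by lifting topes of $\mathcal P\setminus g$ to topes of $\mathcal P$ and restricting them to each summand.

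\textbf{Structural claims.} Set $\mathcal P_j=\mathcal M_1\oplus_P\cdots\oplus_P\mathcal M_j$.
\begin{itemize}
\item \emph{Hypotheses for iteration.} I will check by induction that $g$ is neither a loop nor a coloop of $\mathcal P_j$. A basis of $\mathcal P_j$ that avoids or contains $g$ is obtained from \eqref{eq:parallel-bases} using bases of the summands that avoid or contain $g$. Hence \eqref{eq:parallel-rank} applies at every step and gives $\rk(\mathcal P)=1+\sum_i(r_i-1)$.
\item \emph{Rank of the deletion.} Since $g$ is not a coloop of $\mathcal P$, deleting it does not lower the rank.
\item \emph{Number of elements.} The count $\sum_i(|E_i|-1)$ is immediate from the disjointness of the sets $E_i\setminus\{g\}$.
\item \emph{Simplicity.} The circuits of $\mathcal P\setminus g$ are the circuits of $\mathcal P$ whose $g$-coordinate is zero. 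Iterating \eqref{eq:parallel-circuits}, every circuit of $\mathcal P$ is either a circuit of some $\mathcal M_i$, or arises from circuits $C_1,\dots$ of several summands, each containing $g$, by composition with $g$ zeroed.
  \begin{itemize}
  \item In the first case the size is at least $3$, since $\mathcal M_i$ is simple.
  \item In the second case each $C_i$ has size at least $3$, so it contributes at least $2$ elements other than $g$. The composed circuit therefore has size at least $4$.
  \end{itemize}
  So $\mathcal P\setminus g$ is simple. I would phrase this as an induction on $j$, noting at each step that $\mathcal P_j$ itself has all circuits of size at least $3$.
\end{itemize}

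\textbf{Facet count.} Let $T$ be a tope of $\mathcal P\setminus g$.
\begin{itemize}
\item \emph{Lifting $T$.} By \eqref{eq:deletion-covectors}, $T=X|_{E\setminus g}$ for some covector $X$ of $\mathcal P$. Take any tope $W$ of $\mathcal P$; since $g$ is not a loop, $W(g)\neq0$. Then $\hat T=X\circ W$ is a tope of $\mathcal P$ extending $T$, and $\hat T(g)=s\neq0$.
\item \emph{Restricting to summands.} By \eqref{eq:parallel-covectors}, each $\hat T_i:=\hat T|_{E_i}$ is a tope of $\mathcal M_i$.
\item \emph{Facets of $\hat T_i$ not supported by $g$.} I claim $\hat T_i$ has at least $r_i$ such facets. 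If $g$ supports no facet of $\hat T_i$, this follows because every tope has at least $r_i$ facets. If $g$ does support a facet, then $\hat T_i$ is not simplicial because $g$ is mutation-free. So $\hat T_i$ has at least $r_i+1$ facets, hence at least $r_i$ not supported by $g$.
\item \emph{Gluing a facet back.} Take such a facet $Y_i$ of $\hat T_i$, supported by $e\in E_i\setminus g$. Then $Y_i(g)=s$. Define $Z$ on $E_1\cup\dots\cup E_k$ by $Z|_{E_i}=Y_i$ and $Z|_{E_j}=\hat T_j$ for $j\neq i$. The $g$-coordinates agree, so $Z$ is well defined, and by \eqref{eq:parallel-covectors} $Z$ is a covector of $\mathcal P$.
\item \emph{Conclusion.} The restriction $Z|_{E\setminus g}$ is a covector of $\mathcal P\setminus g$ that agrees with $T$ except for a single zero at $e$. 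In a simple oriented matroid this is a facet of $T$ supported by $e$, because a single zero below a tope drops height by exactly one. The supporting elements for different $i$ lie in the disjoint sets $E_i\setminus g$, so $T$ has at least $\sum_i r_i$ facets.
\end{itemize}

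\textbf{Main obstacle.} I expect the main obstacle to be the facet count, specifically the synchronization of the $g$-coordinate across summands. The choice of a single lift $\hat T$ with one fixed sign $s$ is what makes the gluing of a facet of one summand with topes of the others legitimate. The mutation-free hypothesis is exactly what excludes losing the facet at $g$ in each summand.
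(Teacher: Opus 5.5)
Your proposal is correct and follows the paper's proof essentially step for step. It uses the same basis-formula argument that $g$ is neither a loop nor a coloop, the same circuit-size argument for simplicity, and the same lift--restrict--glue argument for the facet count, with mutation-freeness guaranteeing $r_i$ facets of each $\hat T_i$ not supported by $g$. The only cosmetic difference is how you get a common sign at $g$: you compose the lift $X$ with a global tope $W$ of $\mathcal P$, whereas the paper fills a possible zero at $g$ separately in each summand. Note also that your step ``at least $r_i+1$ facets, hence at least $r_i$ not supported by $g$'' silently uses the fact, stated explicitly in the paper, that by simplicity at most one facet of $\hat T_i$ has zero-set $\{g\}$.
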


\begin{proof}
Iterating~\eqref{eq:parallel-rank} gives
$
\rk(\mathcal P)=1+\sum_{i=1}^k(r_i-1).
$
Moreover, by~\eqref{eq:parallel-bases}, choosing in every summand a basis containing $g$ gives a basis of
$\mathcal P$ containing $g$, while choosing in one summand a basis avoiding
$g$ and in all other summands bases containing $g$ gives a basis of
$\mathcal P$ avoiding $g$. Thus $g$ is neither a loop nor a coloop of
$\mathcal P$. Hence, deleting $g$ preserves the rank and leaves $\sum_i(|E_i|-1)$ elements.

Iterating~\eqref{eq:parallel-circuits} shows that $\mathcal P$ is simple. Indeed, circuits inherited from a summand have size at least $3$, while a circuit created by joining two circuits through $g$ contains, after deleting $g$, at least $2$ elements from each side. Since deletion only removes circuits, $\mathcal P\setminus g$ is simple, too.

To see the lower bound on the number of facets, let $T$ be any tope of $\mathcal P\setminus g$. By~\eqref{eq:deletion-covectors}, choose a covector $X$ of $\mathcal P$ with $X\setminus g=T$. Iterating~\eqref{eq:parallel-covectors} implies that $X_i:=X|_{E_i}$ is a covector of $\mathcal M_i$ for every $i$. If $X(g)\neq0$, then $X_i$ is a tope of $\mathcal M_i$ and we set $\sigma=X(g)$. If $X(g)=0$, then $X_i^0=\{g\}$, and we choose $\sigma\in\{+,-\}$ arbitrarily. Composing $X_i$ with a tope of $\mathcal M_i$ whose sign at $g$ is $\sigma$ fills this zero. Hence in either case we may choose topes $T_i$ of $\mathcal M_i$ such that $
T_i\setminus g=T|_{E_i\setminus\{g\}}$ and $T_i(g)=\sigma$.

We show that $T_i$ has at least $r_i$ facets not supported by $g$. Indeed, if no facet of $T_i$ is supported by $g$, all of its at least $r_i$ facets are available. If exactly one facet of $T_i$ is supported by $g$, then since $g$ is mutation-free $T_i$ is not simplicial, so $T_i$ has at least $r_i+1$ facets and again at least $r_i$ facets not supported by $g$. Since $\mathcal M_i$ is simple, every facet of $T_i$ has a singleton zero-set, and 
not more than one facet of $T_i$ is supported by $g$.

Now, for any facet $F_i<T_i$ not supported by $g$, define a sign vector $Y_i$ by keeping $F_i$ on $E_i$ and keeping $T_j$ on every other summand. Since the sign at $g$ is $\sigma$ for all of the $T_j$ and $F_i$, iterating~\eqref{eq:parallel-covectors} gives that $Y_i$ is a covector of $\mathcal P$. After deleting $g$, $Y_i$ agrees with $T$ everywhere except at the unique zero of $F_i$. Thus $Y_i\setminus g$ is a facet of $T$. Facets obtained from different summands are distinct. Thus $T$ has at least $\sum_i r_i$ facets.
\end{proof}

\mainSimplex*

\begin{proof}
By the average-subface theorem of Fukuda--Tamura--Tokuyama~\cite{FTT93}, the average number of facets of a tope in a rank-$r$ oriented matroid is strictly less than $2r-2$ for $r\ge3$.
Thus, some tope has at most $2r-3$ facets, proving the upper bound. 
For the lower bound let $r\geq7$, set $k=\left\lfloor\frac{r-1}{3}\right\rfloor$,\ with $t=r-(3k+1)\in\{0,1,2\}$, and
apply \Cref{prop:connection-facets} to $k$ copies of Hall's uniform
rank-$4$ oriented matroid on $13$ elements with a mutation-free element $g$~\cite{Hal04}. The deletion of $g$ is a simple rank-$(3k+1)$
oriented matroid on $12k$ elements with at least $4k$ facets at every
tope. Adding $t$ coloops preserves simplicity, raises the rank by $t$,
and adds one facet to every tope for each added coloop. Hence the resulting
rank-$r$ oriented matroid has at least
\[
 4k+t=r+\left\lfloor\frac{r-1}{3}\right\rfloor-1
\]
facets at every tope. For $r=7$ this construction uses two Hall summands and no coloops, so it
has $24$ elements and every tope has at least $8$ facets. In particular,
it has no simplicial tope.
\end{proof}

\section{Counterexamples to the Cordovil--Las Vergnas conjecture}\label{sec:weakmaps}
In this section we will make use of further standard oriented matroid notions that can also be found in~\cite{And25,BLSWZ99}. If
$B=(b_1,\ldots,b_r)$ is an ordering of a basis and $e\notin B$, the \emph{fundamental circuit}
$C(B,e)$ is the signed circuit with support equal to the unique circuit contained
in $B\cup\{e\}$, normalized by $C(B,e)(e)=+$. For $f\in B$, the
\emph{fundamental cocircuit} $D(B,f)$ is the signed cocircuit with support equal
to the unique cocircuit contained in $(E\setminus B)\cup\{f\}$, normalized by
$D(B,f)(f)=+$. Their supports intersect
in a subset of $\{f,e\}$, so orthogonality gives
\begin{equation}\label{eq:pm}
 C(B,e)(f)=-D(B,f)(e).
\end{equation}

Moreover,
\begin{equation}\label{eq:fundamental-cocircuit}
 D(B,f)(e)=\chi(b_1,\ldots,e,\ldots,b_r)\,\chi(b_1,\ldots,b_r),
\end{equation}
with $e$ in the position of $f$ \cite[Section~3.5]{BLSWZ99}.

For oriented matroids $\mathcal U,\mathcal M$ of the same rank on the same
ground set, we write $\mathcal U\rightsquigarrow\mathcal M$ for a
rank-preserving \emph{weak map} if, after choosing their global chirotope
signs compatibly, i.e., possibly replacing one chirotope by its negative, we have $ \chi_{\mathcal M}(B)\in\{0,\chi_{\mathcal U}(B)\}$ 
for every ordered $r$-subset $B$ \cite[Proposition~7.7.5]{BLSWZ99}. If $\mathcal U\rightsquigarrow\mathcal M$ is a rank-preserving weak map and
$B$ is a basis of both oriented matroids, then the fundamental-cocircuit
formula~\eqref{eq:fundamental-cocircuit} shows that the entries of the fundamental cocircuits
at $B$ can only become zero.
The crucial lemma of this section is:

\begin{lemma}\label{lem:weak-map-mutation}
Let $\mathcal M$ be a simple oriented matroid and let
$\mathcal U\rightsquigarrow\mathcal M$, where $\mathcal U$ is uniform.
If a basis $B$ of $\mathcal M$ is a mutation of $\mathcal U$, then
$\mathcal M$ has a simplicial tope whose facets are supported precisely by
the elements of $B$.
\end{lemma}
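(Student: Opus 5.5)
The plan is to describe the simplicial tope of $\mathcal U$ at $B$ through fundamental cocircuits, and then transfer it to $\mathcal M$ using the weak map.

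\textbf{Step 1: a simplicial tope of $\mathcal U$ gives a tope of $\mathcal M$.} Let $B=\{b_1,\dots,b_r\}$ be a mutation of $\mathcal U$. Then $\mathcal U$ has a simplicial tope $T$ whose facets are supported exactly by $b_1,\dots,b_r$. Since $\mathcal U$ is uniform, the vertices of the simplicial cell are cocircuits $D_j$ with $D_j^0=B\setminus\{b_j\}$. Up to sign, these are the fundamental cocircuits $D_{\mathcal U}(B,b_j)$. Choose signs $\epsilon_j\in\{+,-\}$ with
\[
T=\epsilon_1 D_{\mathcal U}(B,b_1)\circ\cdots\circ\epsilon_r D_{\mathcal U}(B,b_r).
\]
Being a simplicial cell means more precisely that for every $e\notin B$ the values $\epsilon_j D_{\mathcal U}(B,b_j)(e)$ all agree over $j$, and this common value is $T(e)$. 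This is the standard characterization: a simplex is cut out by $B$ exactly when no other pseudosphere separates its vertices. I would justify it via the facets: each facet $F_j$ is the composition of the vertices other than $D_j$, so for $e\notin B$ all vertices must share one sign at $e$.

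Now pass to $\mathcal M$. Since $B$ is a basis of $\mathcal M$, \eqref{eq:fundamental-cocircuit} shows that $D_{\mathcal M}(B,b_j)(e)\in\{0,D_{\mathcal U}(B,b_j)(e)\}$, once global signs are chosen compatibly (the product of two chirotope values makes the global sign irrelevant). Set
\[
T'=\epsilon_1 D_{\mathcal M}(B,b_1)\circ\cdots\circ\epsilon_r D_{\mathcal M}(B,b_r).
\]
On $b_j$ it equals $\epsilon_j$. For $e\notin B$, all nonzero entries agree with $T(e)$.

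\textbf{Step 2: $T'$ has no zeros.} If $T'(e)=0$ for some $e\notin B$, then $D_{\mathcal M}(B,b_j)(e)=0$ for all $j$. By \eqref{eq:pm}, the fundamental circuit $C_{\mathcal M}(B,e)$ then has support $\{e\}$, so $e$ is a loop. This contradicts simplicity of $\mathcal M$. Hence $T'$ is a tope of $\mathcal M$, and it is nonzero everywhere.

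\textbf{Step 3: $T'$ is simplicial with facets supported by $B$.} For each $j$, let $F_j$ be the composition of the $\epsilon_iD_{\mathcal M}(B,b_i)$ with $i\neq j$. This is a covector with $F_j(b_j)=0$ and $F_j(b_i)=\epsilon_i$ for $i\ne j$. On $e\notin B$ its entries are either $T(e)$ or $0$.

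\emph{Main obstacle.} I must show that $F_j$ vanishes only at $b_j$, i.e.\ that no $e\notin B$ has $D_{\mathcal M}(B,b_i)(e)=0$ for all $i\neq j$. If this happened, the fundamental circuit $C_{\mathcal M}(B,e)$ would have support contained in $\{e,b_j\}$. That is a circuit of size at most $2$, contradicting simplicity, since $\mathcal M$ has no loops or parallel elements. So $F_j\le T'$ with $F_j^0=\{b_j\}$, and $F_j$ is a facet of $T'$ supported by $b_j$.

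It remains to see that $T'$ has no further facets. The set $\{0\}\cup\{\text{compositions of subsets of the }\epsilon_iD_{\mathcal M}(B,b_i)\}$ realizes a Boolean lattice below $T'$. Any covector $Y\le T'$ can be written as a composition of cocircuits below it. Each such cocircuit $Z\le T'$ vanishes on $r-1$ elements spanning a hyperplane. To conclude I would argue via sign conditions: a covector $Y\le T'$ is determined, via orthogonality with the fundamental circuits $C_{\mathcal M}(B,e)$, by its restriction to $B$. The sign pattern of $C_{\mathcal M}(B,e)$ on $B$ is $-T(e)\epsilon$ on its support, and $T'(e)=T(e)$. Orthogonality then forces $Y(e)=T(e)$ as soon as $Y$ is nonzero on some $b_i$ in the support, and $Y(e)=0$ otherwise. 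So every covector below $T'$ equals the composition of those $\epsilon_iD_{\mathcal M}(B,b_i)$ with $Y(b_i)\ne0$. The interval $[0,T']$ is therefore Boolean with atoms indexed by $B$, and $T'$ is simplicial with facets supported precisely by $B$.
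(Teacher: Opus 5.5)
Your proof is correct and is essentially the paper's argument: the fundamental cocircuits at $B$ pass through the weak map with entries only becoming zero, the absence of loops makes their signed composition $T'$ a tope, and orthogonality with the fundamental circuits $C_{\mathcal M}(B,e)$ controls the faces of $T'$. The differences are minor. You carry the signs $\epsilon_j$ instead of reorienting so that the tope is positive, and you apply the orthogonality argument to every covector $Y\le T'$ to show directly that $[0,T']$ is Boolean; the paper applies it only to a putative facet with zero outside $B$, then counts using the quoted lower bound of $r$ facets per tope, which is shorter but relies on that external fact.
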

\begin{proof}
Reorient $\mathcal U$ and $\mathcal M$ simultaneously, which preserves the
weak map, so that a simplicial tope of $\mathcal U$ corresponding to the
mutation $B$ is positive. The fundamental cocircuits
$D_{\mathcal U}(B,b)$, $b\in B$ are the cocircuits corresponding to the atoms of the Boolean interval below that tope. Thus, they are nonnegative. Since the corresponding
cocircuits of $\mathcal M$ are obtained by replacing some entries by zero,
they too are nonnegative.

Since $\mathcal M$ has no loops, for every $e\notin B$ the fundamental
circuit $C_{\mathcal M}(B,e)$ contains some $b\in B$. Hence
$D_{\mathcal M}(B,b)(e)\neq0$ for some $b\in B$, and therefore
$D_{\mathcal M}(B,b)(e)=+$, as all these cocircuits are nonnegative.
Moreover, $D_{\mathcal M}(B,b)(b)=+$ for every $b\in B$. Thus the
composition of the $D_{\mathcal M}(B,b)$, $b\in B$, is the positive tope
$T$ of $\mathcal M$.

Let $F$ be a facet of $T$. Since $\mathcal M$ is simple,
$F^0=\{e\}$ for some $e\in E$. Suppose that $e\notin B$.
Then $F(b)=+$ for every $b\in B$. The fundamental circuit
$C_{\mathcal M}(B,e)$ satisfies $C_{\mathcal M}(B,e)(e)=+$,
and by~\eqref{eq:pm}
all of its nonzero entries on $B$ are negative. Since
$F(e)=0$, every nonzero product of corresponding entries of $F$ and $C_{\mathcal M}(B,e)$ is negative, contradicting orthogonality.

Thus every facet of $T$ is supported by an element of $B$. Since $\mathcal M$ is simple, a facet below the positive tope $T$ is determined by its singleton zero-set, so $T$ has at most $|B|=r$ facets. Every tope of a simple rank-$r$ oriented matroid has at least $r$ facets. Hence $T$ has exactly $r$ facets and is simplicial, with its facets supported precisely by the elements of $B$.
\end{proof}

Recall that, for fixed $n,r$, $\overline{\G}^{n,r}$ denotes the mutation graph on
labeled uniform rank-$r$ oriented matroids on a fixed $n$-element ground set,
$\G^{n,r}$ is its quotient by reorientation, and
$\underline{\G}^{n,r}$ is its quotient by relabeling and reorientation. Adjacency is induced by
mutations. 
\begin{proposition}\label{thm:fiber}
Let $\mathcal M$ be a simple oriented matroid of rank $r$ on $n$ elements with no
simplicial tope. Then the fiber

$$
 \mathcal W(\mathcal M)=\{[\mathcal U]\in V(\G^{n,r}):
       \text{some representative }\mathcal U\text{ satisfies }
       \mathcal U\rightsquigarrow\mathcal M\}
$$

is a nonempty proper union of connected components of $\G^{n,r}$. Moreover,
fibers over reorientation-inequivalent orientations of the same underlying
matroid are disjoint.
\end{proposition}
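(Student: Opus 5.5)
We need to prove four things: the fiber is nonempty, it is closed under mutation (hence a union of components), it is proper, and fibers over reorientation-inequivalent orientations are disjoint.

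\emph{Nonemptiness.} Every oriented matroid is the weak-map image of a uniform one of the same rank, since every oriented matroid admits a uniform perturbation; see \cite[Section~7.7]{BLSWZ99}. In pseudosphere language, one perturbs the arrangement into general position while keeping all nonzero chirotope values. Applying this to $\mathcal M$ gives a uniform $\mathcal U\rightsquigarrow\mathcal M$, so $[\mathcal U]\in\mathcal W(\mathcal M)$.

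\emph{Well-definedness on reorientation classes.} Reorienting a set $A$ in both $\mathcal U$ and $\mathcal M$ multiplies both chirotopes by the same sign $(-1)^{|B\cap A|}$. Hence $\mathcal U\rightsquigarrow\mathcal M$ implies ${}_{-A}\mathcal U\rightsquigarrow{}_{-A}\mathcal M$. In the definition we do not reorient $\mathcal M$, so membership means that some representative maps to $\mathcal M$ itself.

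\emph{Closure under mutation.} Let $\mathcal U\rightsquigarrow\mathcal M$, and let $\mathcal U'$ be obtained from $\mathcal U$ by a mutation at an $r$-set $B$. First suppose $B$ is a basis of $\mathcal M$. Then \Cref{lem:weak-map-mutation} gives $\mathcal M$ a simplicial tope, contradicting the hypothesis. So $\chi_{\mathcal M}(B)=0$. Now $\chi_{\mathcal U'}$ agrees with $\chi_{\mathcal U}$ on every $r$-subset except $B$, where $\chi_{\mathcal M}$ vanishes. Hence $\chi_{\mathcal M}(B')\in\{0,\chi_{\mathcal U'}(B')\}$ for all $B'$, that is, $\mathcal U'\rightsquigarrow\mathcal M$. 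Every edge of $\G^{n,r}$ lifts to an edge of $\overline{\G}^{n,r}$ between representatives, and a mutation of some reorientation of $\mathcal U$ is a reorientation of a mutation of $\mathcal U$. Therefore every neighbour of $[\mathcal U]$ lies in $\mathcal W(\mathcal M)$, and $\mathcal W(\mathcal M)$ is a union of components.

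\emph{Properness.} One option is a realizable uniform $\mathcal U_0$, for example the alternating matroid, which has a simplicial tope by Shannon. The issue is whether $\mathcal U_0$ could still weak-map onto $\mathcal M$. Arguing directly from simplicial topes of $\mathcal U_0$ seems delicate, so I would instead use disjointness. Choose any reorientation-inequivalent orientation $\mathcal M'$ of the same underlying matroid; one exists by reorienting $\mathcal M$ suitably or taking a different realization. Its fiber is disjoint from $\mathcal W(\mathcal M)$. However, $\mathcal W(\mathcal M')$ is nonempty by the nonemptiness argument, which needs no simpliciality, so $\mathcal W(\mathcal M)\neq V(\G^{n,r})$.

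A cleaner route avoids needing a second orientation. Take $\mathcal M'={}_{-\{e\}}\mathcal M$ for a single element $e$. If this is reorientation-equivalent to $\mathcal M$, use instead the uniform matroid $\mathcal U_0$ and a nonbasis $B$ of $\mathcal M$. Flip the sign of $\chi$ on $B$ in a uniform $\mathcal U\rightsquigarrow\mathcal M$, which is not always possible. I expect this existence step to be the main obstacle. The simplest fix: any simple oriented matroid $\mathcal M$ with a nonbasis is an orientation of a matroid with a nonbasis, and the alternating oriented matroid on $n$ elements has simplicial topes. If it lay in $\mathcal W(\mathcal M)$, then since $\mathcal W(\mathcal M)$ is a union of components, all of its mutation-neighbours would too. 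I would try to show some uniform matroid is not in the fiber using disjointness with a second orientation.

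\emph{Disjointness.} Suppose $\mathcal U\rightsquigarrow\mathcal M$ and ${}_{-A}\mathcal U\rightsquigarrow\mathcal M'$, where $\mathcal M,\mathcal M'$ are orientations of the same matroid. Then ${}_{-A}\mathcal M$ and $\mathcal M'$ have the same bases. On every basis, both chirotopes equal $\chi_{{}_{-A}\mathcal U}$ up to one global sign each. So $\chi_{\mathcal M'}=\pm\chi_{{}_{-A}\mathcal M}$, that is, $\mathcal M'={}_{-A}\mathcal M$, contradicting inequivalence.
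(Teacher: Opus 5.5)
Your arguments for nonemptiness, closure under mutation, and disjointness are correct and essentially match the paper's. The gap is \emph{properness}, which your proposal never establishes.

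The route through a second orientation $\mathcal M'$ does not work as stated. Any reorientation of $\mathcal M$, in particular ${}_{-\{e\}}\mathcal M$, is reorientation-equivalent to $\mathcal M$ by definition, so it cannot serve as $\mathcal M'$. ``A different realization'' is not available either: $\mathcal M$ has no simplicial tope, so by Shannon's theorem it is not realizable at all. More generally, an underlying matroid need not have two reorientation-inequivalent orientations, so the existence of $\mathcal M'$ is an unproved assumption. Your proposal ends with ``I would try to show\ldots'', so this step is simply left open.

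The missing idea is one you already used in the closure step. By \Cref{lem:weak-map-mutation}, if $\mathcal U\rightsquigarrow\mathcal M$, then no basis of $\mathcal M$ is a mutation of $\mathcal U$.

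Your worry about whether a particular realizable $\mathcal U_0$ might still weakly map to $\mathcal M$ disappears once you use the freedom to relabel. Fix a basis $B$ of $\mathcal M$. Take a realizable uniform rank-$r$ oriented matroid $\mathcal R$ on $E$. By Shannon it has a simplicial tope, hence a mutation $B_0$. Relabel $\mathcal R$ so that $B_0$ becomes $B$.

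Reorientation preserves mutations, so every representative of $[\mathcal R]$ has the mutation $B$, which is a basis of $\mathcal M$. Hence no representative weakly maps to $\mathcal M$, and $[\mathcal R]\notin\mathcal W(\mathcal M)$. This is exactly how the paper proves properness, and it needs no second orientation and no disjointness argument.
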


\begin{proof}

If
$\mathcal U\rightsquigarrow\mathcal M$ has mutation  $B$, then by \Cref{lem:weak-map-mutation} since $\mathcal M$ has no simplicial tope, $B$ is not a
basis of $\mathcal M$, so $\chi_{\mathcal M}(B)=0$. The mutation therefore
preserves the weak map to $\mathcal M$. Mutations commute with
reorientations; so if an edge of $\G^{n,r}$ is represented by a mutation
of another representative of $[\mathcal U]$, reorienting both endpoints
back to $\mathcal U$ shows that the adjacent class again has a
representative weakly mapping to $\mathcal M$. Hence
$\mathcal W(\mathcal M)$ is a union of components. Further, the fiber is
nonempty because every oriented matroid admits a uniform weak-map preimage of the same rank and groundset\cite[Corollary~7.7.9]{BLSWZ99}.

To see that the fiber is proper, choose a basis $B$ of $\mathcal M$ and a
realizable uniform rank-$r$ oriented matroid $\mathcal R$ on $E$. By
Shannon's theorem, $\mathcal R$ has a simplicial tope and hence a mutation
\cite{Sha79}. Relabel $\mathcal R$ so that this mutation is $B$. Since
reorientation preserves mutations, no representative of $[\mathcal R]$
weakly maps to $\mathcal M$, by \Cref{lem:weak-map-mutation}. Thus
$[\mathcal R]\notin\mathcal W(\mathcal M)$.

Finally suppose $\mathcal U\rightsquigarrow\mathcal M$ and
${}_{-A}\mathcal U\rightsquigarrow\mathcal M'$ for a reorientation on $A\subseteq E$, and
$\mathcal M'$ has the same underlying matroid as $\mathcal M$. Then
$\mathcal U\rightsquigarrow{}_{-A}\mathcal M'$ as well. Both
$\mathcal M$ and ${}_{-A}\mathcal M'$ vanish exactly on the non-bases of
the common underlying matroid and take the value $\chi_{\mathcal U}(B)$,
up to a global sign, on every basis $B$. Hence
$\mathcal M={}_{-A}\mathcal M'$ up to the global chirotope sign, and
$\mathcal M,\mathcal M'$ are reorientation-equivalent.
\end{proof}


\begin{lemma}\label{lem:mutation-monotonicity}
Let $\mathcal H^{n,r}$ be any of
$\overline{\G}^{n,r}$, $\G^{n,r}$, and $\underline{\G}^{n,r}$.
If $\mathcal H^{n,r}$ is disconnected and $r'\ge r$ and
$n'-r'\ge n-r$, then $\mathcal H^{n',r'}$ is disconnected.
\end{lemma}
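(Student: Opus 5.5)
The plan is to reduce to two elementary steps, $(n,r)\to(n+1,r)$ and $(n,r)\to(n+1,r+1)$. Any pair $(n',r')$ with $r'\ge r$ and $n'-r'\ge n-r$ is reached from $(n,r)$ by $r'-r$ steps of the second kind followed by $(n'-r')-(n-r)$ steps of the first kind. The second step will be reduced to the first by duality. For uniform oriented matroids, $\chi^*(E\setminus B)=\pm\chi(B)$ with a sign depending only on the ordering. So $B$ is a mutation of $\mathcal U$ iff $E\setminus B$ is a mutation of $\mathcal U^*$, and mutating commutes with dualizing. Duality also commutes with reorientation and relabeling. Hence it induces isomorphisms $\mathcal H^{n,r}\cong\mathcal H^{n,n-r}$ for each of the three graphs. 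The step $(n,r)\to(n+1,r+1)$ is then the composition of dualizing to $(n,n-r)$, applying the first step to reach $(n+1,n-r)$, and dualizing back to $(n+1,r+1)$.

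For the first step I would use single-element deletion. Fix a label $e$ and consider $\mathcal U'\mapsto\mathcal U'\setminus e$, from uniform rank-$r$ oriented matroids on $E\cup\{e\}$ to uniform rank-$r$ oriented matroids on $E$; the image is uniform of rank $r$ as $n\ge r$. This map is a graph homomorphism that may collapse edges. Let $B$ be a mutation of $\mathcal U'$, so flipping $\chi_{\mathcal U'}(B)$ gives a chirotope $\chi'$. If $e\in B$, then $\chi'|_{E^r}=\chi_{\mathcal U'}|_{E^r}$ and the deletion is unchanged. If $e\notin B$, then $\chi'|_{E^r}$ is a chirotope, being the deletion of a chirotope. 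It differs from $\chi_{\mathcal U'\setminus e}$ exactly at $B$, so $B$ is a mutation of $\mathcal U'\setminus e$. The map is surjective, because every uniform oriented matroid has a uniform single-element extension, for instance a lexicographic extension $[a_1^+,\dots,a_r^+]$ with $a_1,\dots,a_r$ a basis. Given a disconnection $V(\overline{\G}^{n,r})=C\sqcup C'$ into nonempty unions of components, the preimages of $C$ and $C'$ are nonempty, disjoint, and closed under mutation. This disconnects $\overline{\G}^{n+1,r}$. The map commutes with reorientation, so the same argument works for $\G$.

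The main obstacle is $\underline{\G}$, where relabeling makes "delete $e$" ill defined. Here I would fix a nonempty proper union of components $C$ of $\underline{\G}^{n,r}$. Let $W'$ be the set of classes $[\mathcal U']$ such that \emph{some} single-element deletion $\mathcal U'\setminus f$ lies in $C$. By the homomorphism property applied to every $f$ at once, each deletion of a mutated $\mathcal U'$ is equal or adjacent to the corresponding deletion of $\mathcal U'$. So $W'$ is a union of components, and it is nonempty by surjectivity. The hard part is properness: I need a uniform $\mathcal U'$ on $n+1$ elements \emph{all} of whose deletions avoid $C$. I would first try to take the complement $C'$ to be the component of a realizable $\mathcal R$, which is connected to all realizable classes by Roudneff--Sturmfels. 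Then $C$ contains no realizable class, and any realizable uniform $\mathcal U'$ has all deletions realizable, hence outside $C$. If instead realizable classes lie in $C$, I would swap the roles of $C$ and the complement, or argue with the fiber $\mathcal W(\mathcal M)$ of \Cref{thm:fiber}, which avoids realizable classes. With that choice, the same realizability trick, together with taking $C$ to be the union of all components meeting the realizable ones, gives properness for every graph uniformly.
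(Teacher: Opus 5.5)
Your proof is correct, but it takes a different route from the paper's in two places.

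For the step $(n,r)\to(n+1,r+1)$, the paper uses contraction of a fixed element as a second surjective weak graph homomorphism $\overline{\G}^{n+1,r+1}\to\overline{\G}^{n,r}$, with surjectivity coming from the existence of uniform single-element liftings. You instead conjugate the deletion step by the duality isomorphisms $\mathcal H^{n,r}\cong\mathcal H^{n,n-r}$. This is the same argument seen through duality, since contraction is dual to deletion and liftings are dual to extensions. Your version needs only uniform extensions, which you construct explicitly as lexicographic extensions along a basis, so it is slightly more self-contained.

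For $\underline{\G}$, the paper never argues inside $\underline{\G}$. It passes to $\G$, propagates disconnectedness there, and returns using the equivalence of connectivity of $\G$ and $\underline{\G}$ from \cite{KM23}. You work directly with the relabeling-invariant set $W'$ of classes having \emph{some} single-element deletion in $C$, and get properness from Roudneff--Sturmfels. This trades one citation for another, and is essentially the mechanism behind that equivalence, but it shows explicitly why relabeling does no harm.

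The last paragraph should be tidied, since there is nothing to ``try'' or ``swap''. All realizable classes lie in one component $K_0$ of $\underline{\G}^{n,r}$. Set $C:=V(\underline{\G}^{n,r})\setminus K_0$, which is nonempty because the graph is disconnected. Then every realizable class of $\underline{\G}^{n+1,r}$ lies outside $W'$, because all its deletions are realizable.

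Your closing sentence takes $C$ to be the union of components meeting the realizable classes, which reverses the roles. With that $C$, the set ``some deletion lies in $C$'' contains every realizable class, and your argument supplies no witness that it is proper. You would have to use ``all deletions lie in $C$'' instead, which is just the complement of the correct $W'$.
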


\begin{proof}
A map $f\colon V(H)\to V(K)$ is a \emph{weak graph homomorphism} if for every
edge $xy$ of $H$ either $f(x)=f(y)$ or $f(x)f(y)$ is an edge of $K$.
For a uniform rank-$r$ oriented matroid $\mathcal U$ on $E\cup\{e\}$,
where $|E|=n\ge r$, deletion is given by
$\chi_{\mathcal U\setminus e}(b_1,\ldots,b_r)=\chi_{\mathcal U}(b_1,\ldots,b_r)$
for $b_1,\ldots,b_r\in E$. For a uniform rank-$(r+1)$ oriented matroid
$\mathcal V$ on the same set, contraction is given, up to global sign, by
$\chi_{\mathcal V/e}(b_1,\ldots,b_r)=\chi_{\mathcal V}(e,b_1,\ldots,b_r)$
\cite[Section~3.5]{BLSWZ99}.
Consequently, a mutation at $B$ leaves the deletion unchanged if $e\in B$,
and otherwise changes only its value at $B$. It leaves the contraction
unchanged if $e\notin B$, and otherwise changes only its value at
$B\setminus\{e\}$, in each case together with the values forced by alternation.
Deletion and contraction of a fixed element give surjective weak graph
homomorphisms

$$
 \overline{\G}^{n+1,r}\longrightarrow\overline{\G}^{n,r},
 \qquad
 \overline{\G}^{n+1,r+1}\longrightarrow\overline{\G}^{n,r}.
$$

Surjectivity follows from the fact that every uniform oriented matroid
$\mathcal M$ admits a uniform single-element extension
$\widetilde{\mathcal M}$ with $\widetilde{\mathcal M}\setminus e=\mathcal M$
and a uniform single-element lifting $\widehat{\mathcal M}$ with
$\widehat{\mathcal M}/e=\mathcal M$
\cite[Sections~7.1--7.2]{BLSWZ99}. A surjective weak graph homomorphism
maps each component into a component and hits every component, so the
preimage has at least as many components as the image. Both maps descend to
reorientation classes, since deletion and contraction commute with
reorientation up to reorientation and global chirotope sign. Hence
disconnectedness propagates for $\overline{\G}$ and $\G$ under the stated
conditions. Connectivity of $\G$ and $\underline{\G}$ is equivalent
\cite[Observation~3.1 and Proposition~3.3]{KM23}, so the same holds for
$\underline{\G}$. Iterating proves the claim.
\end{proof}

\mainMutation*
\begin{proof}
By \Cref{thm:main-simplex}, there is a simple rank-$7$ oriented
matroid $\mathcal M$ on $24$ elements with no simplicial tope.
\Cref{thm:fiber} implies that
${\mathcal G}^{24,7}$ is disconnected, so $\overline{{\mathcal G}}^{24,7}$ is disconnected and by
\cite[Proposition~3.3]{KM23} as well $\underline{{\mathcal G}}^{24,7}$. \Cref{lem:mutation-monotonicity}
therefore gives disconnection whenever $r\geq 7$ and
$n-r\geq 17$.

For the quantitative statement, use the above $\mathcal M$
and let $q=|E(\mathcal M)|$. Let $s=r-7\geq 3$ and $m=n-q$.
For every uniform rank-$s$ oriented matroid $\mathcal U$ on a fixed
$m$-element set disjoint from $E(\mathcal M)$, the direct sum
$\mathcal M\oplus\mathcal U$ is a simple rank-$r$ oriented matroid on
$n$ elements with no simplicial tope. Indeed, if
$T=T_{\mathcal M}\oplus T_{\mathcal U}$ is a tope, then $
[0,T]=[0,T_{\mathcal M}]\times[0,T_{\mathcal U}]$, 
which is not Boolean because $[0,T_{\mathcal M}]$ is not Boolean.

All these direct sums have the same underlying matroid, and restriction to the ground set of the second summand recovers $\mathcal U$.  
Distinct reorientation classes of $\mathcal U$ yield distinct
reorientation classes of $\mathcal M\oplus\mathcal U$.
The moreover-part of \Cref{thm:fiber} therefore gives pairwise disjoint
nonempty unions of components of ${\mathcal G}^{n,r}$, one for
each reorientation class of $\mathcal U$.

For fixed $s\geq 3$, the number of labeled uniform rank-$s$ oriented
matroids on $m$ elements is
$2^{\Theta(m^{s-1})}$~\cite[Corollary~7.4.3]{BLSWZ99}.
Since a reorientation class contains at most $2^m$ oriented matroids,
there are still $2^{\Theta(m^{s-1})}$ such classes. Hence $
c({\mathcal G}^{n,r})
   \geq 2^{\Omega(n^{r-8})}$. 
Passing to isomorphism classes identifies at most $n!$ components, so

$$
c(\underline{\mathcal G}^{n,r})
   \geq \frac{c({\mathcal G}^{n,r})}{n!}
   =2^{\Omega(n^{r-8})},
$$

because $\log_2(n!)=O(n\log n)=o(n^{r-8})$ for $r\geq10$. Finally, all three numbers of components are at most the
number of labeled uniform rank-$r$ oriented matroids on $n$ elements,
which is $2^{O(n^{r-1})}$ by the same counting result.
\end{proof}

\subsection*{Use of AI-assisted tools}
During exploratory work, the authors used OpenAI's ChatGPT to generate and test possible proof strategies, to assist with literature searches, and to improve exposition. In particular, it contributed to the development and checking of the parallel-connection construction in \Cref{sec:twosum} and of the weak-map fiber argument in \Cref{sec:weakmaps}. All mathematical statements, proofs, and citations in the final manuscript were independently checked by the authors, who take full responsibility for their correctness.

\subsection*{Acknowledgments}
KK was supported by the grant PID2022-137283NB-C22 funded by MICIU/AEI/10.13039/\allowbreak 501100011033 and by ERDF/EU and through the Severo Ochoa and Mar\'ia de Maeztu Program for Centers and Units of Excellence in R\&D (CEX2020-001084-M).

\begingroup
\small
\bibliographystyle{alpha}
\bibliography{nonuniform_las_vergnas_counterexample_v24}
\endgroup

\end{document}